\newtheorem{theorem}{Theorem}[section]
\newtheorem{corollary}[theorem]{Corollary}
\theoremstyle{definition}
\newtheorem{definition}[theorem]{Definition}
\newcommand{\Aut}{\mathrm{Aut}}
\newcommand{\Out}{\mathrm{Out}}
\newcommand{\Fix}{\mathrm{Fix}}
\subjclass{20F36 (primary), 20F10, 20F28}
\keywords{Artin groups, twisted conjugacy problem, systolic groups, automorphism groups}
\title{On the twisted conjugacy problem for large-type Artin groups}
\author{Mart\'in Blufstein}
\address[M.~Blufstein]{Department of Mathematical Sciences, University of Copenhagen, 2100 Copenhagen, Denmark}
\email{mblufstein@dm.uba.ar}
\author{Motiejus Valiunas}
\address[M.~Valiunas]{Instytut Matematyczny, Uniwersytet Wroc{\l}awski, plac Grunwaldzki 2, 50-384 Wroc{\l}aw, Poland}
\email{motiejus.valiunas@math.uni.wroc.pl}
\date{}
\begin{document}

\begin{abstract}
\centering \justifying We show that the twisted conjugacy problem is solvable for large-type Artin groups whose outer automorphism group is finite, generated by graph automorphisms and the global inversion.
This includes XXXL Artin groups whose defining graph is connected, twistless, and not an even edge; and large-type Artin groups whose defining graph admits a twistless hierarchy terminating in twistless stars. 
\end{abstract}

\maketitle 

\section{Introduction}

Let $G$ be a group and $\varphi \in \Aut(G)$.
The group $G$ has solvable \emph{$\varphi$-twisted conjugacy problem} ($TCP_\varphi(G))$ if there is an algorithm that given $u,v\in G$ can decide if there exists a $z\in G$ such that $u = \varphi(z) v z^{-1}$.
It has solvable \emph{twisted conjugacy problem} ($TCP(G)$) if there is an algorithm that given $\varphi \in \Aut(G)$ and $u,v \in G$ can decide if there exists a $z\in G$ such that $u = \varphi(z) v z^{-1}$.
When $\varphi$ is trivial, $TCP_{id}(G)$ is known as the \emph{conjugacy problem} ($CP(G)$).

The twisted conjugacy problem has proven to be much harder to solve than the conjugacy problem, and few positive results are known.
In the context of Artin groups, it is only known for braid groups \cite{gonzalezmeneses2014twisted}, and for Artin groups whose defining graph has two vertices \cite{crowe2024twisted1, crowe2024twisted2}.
In this article we show that some large-type Artin groups have solvable twisted conjugacy problem.
Our result is the first one to hold for generic Artin groups in a probabilistic sense (see \cite[Theorem~1.2]{goldsborough2023random}).
Here is the precise statement, for definitions see Section \ref{sec:Artin}.

\begin{theorem}\label{thm:TCP}
    Let $A_\Gamma$ be a large-type Artin group such that $\Out(A_\Gamma)$ is generated by the graph automorphisms and the global inversion.
    Then $TCP(A_\Gamma)$ is solvable.
\end{theorem}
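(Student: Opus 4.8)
The plan is to reduce $TCP(A_\Gamma)$ to finitely many instances of the ordinary conjugacy problem in groups that are virtually $A_\Gamma$. First observe that the subgroup of $\Out(A_\Gamma)$ generated by the graph automorphisms and the global inversion is finite: the global inversion $\iota\colon a_i\mapsto a_i^{-1}$ is a well-defined automorphism of order two that commutes with every graph automorphism, so this subgroup is a quotient of $\Aut(\Gamma)\times\ZZ/2\ZZ$. Under the hypothesis, $\Out(A_\Gamma)$ is therefore finite, and one can write down explicitly finitely many automorphisms $\varphi_1,\dots,\varphi_N\in\Aut(A_\Gamma)$ --- each a graph automorphism composed with $0$ or $1$ copies of $\iota$ --- whose classes exhaust $\Out(A_\Gamma)$; note that each $\varphi_k$ has finite order in $\Aut(A_\Gamma)$, since $\varphi_k^2$ is a graph automorphism. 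Given an arbitrary input $\varphi\in\Aut(A_\Gamma)$ (presented by the words $\varphi(a_i)$), one searches in parallel over all $k\le N$ and all $g\in A_\Gamma$ for a pair with $\varphi(a_i)=g\,\varphi_k(a_i)\,g^{-1}$ for every standard generator $a_i$. Such a pair exists by hypothesis, and candidates are checkable because large-type Artin groups have solvable word problem; this produces an effective factorisation $\varphi=\iota_g\circ\varphi_k$, where $\iota_g$ denotes conjugation by $g$.

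Since $\varphi(z)=g\,\varphi_k(z)\,g^{-1}$, the equation $u=\varphi(z)vz^{-1}$ is equivalent to $g^{-1}u=\varphi_k(z)\,(g^{-1}v)\,z^{-1}$ (with the same $z$), so it suffices to solve $TCP_{\varphi_k}(A_\Gamma)$ for each of the finitely many $\varphi_k$. For this I would pass to the cyclic extension $\widehat A_k:=A_\Gamma\rtimes\langle\varphi_k\rangle$, which is a \emph{finite} extension of $A_\Gamma$ because $\varphi_k$ has finite order. By the standard dictionary between twisted conjugacy in a group $G$ and ordinary conjugacy in a cyclic extension $G\rtimes\langle\varphi\rangle$, the elements $u$ and $v$ are $\varphi_k$-twisted conjugate in $A_\Gamma$ if and only if $u\varphi_k$ and $v\varphi_k$ are conjugate in $\widehat A_k$: conjugating $v\varphi_k$ by $z\in A_\Gamma$ yields $z\,v\,\varphi_k(z)^{-1}\varphi_k$, while conjugating by a power of $\varphi_k$ only replaces $v$ by another element of its twisted conjugacy class, so no spurious identifications occur (one may additionally need to replace $\varphi_k$ by $\varphi_k^{-1}$, which is harmless since our list is closed under inverses). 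As $\widehat A_k$ is an explicit finitely presented group, $TCP_{\varphi_k}(A_\Gamma)$ reduces to the conjugacy problem in $\widehat A_k$.

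It remains to solve $CP(\widehat A_k)$. Large-type Artin groups are biautomatic --- they carry a metrically systolic geometry, by Huang--Osajda. Moreover each $\varphi_k$ is \emph{geometric}: a graph automorphism and the global inversion both preserve the system of standard parabolic subgroups, hence induce simplicial automorphisms of the metrically systolic complex $X_\Gamma$ on which $A_\Gamma$ acts properly and cocompactly. The $A_\Gamma$-action therefore extends to a proper cocompact action of the finite extension $\widehat A_k$ on the same complex $X_\Gamma$, so $\widehat A_k$ carries the same geometry and in particular is biautomatic (alternatively, one invokes that biautomaticity is inherited by finite extensions). Since biautomatic groups have solvable conjugacy problem, composing with the reductions above solves $TCP(A_\Gamma)$.

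The main obstacle is the last step: one must verify that the finite extensions $\widehat A_k$ genuinely inherit the good combinatorial geometry of $A_\Gamma$. Concretely this requires a description of the metrically systolic complex of a large-type Artin group explicit enough to see that graph automorphisms and the global inversion act on it simplicially and that the extended $\widehat A_k$-action stays proper and cocompact, together with the assertion that the resulting groups are biautomatic --- equivalently, that (these particular) metrically systolic groups have solvable conjugacy problem. By contrast, the finiteness of $\Out(A_\Gamma)$, the effective factorisation $\varphi=\iota_g\circ\varphi_k$, and the reduction from twisted conjugacy to conjugacy in $\widehat A_k$ are all routine, given solvability of the word problem in $A_\Gamma$.
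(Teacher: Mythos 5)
Your proposal takes essentially the same route as the paper: find finite-order representatives $\varphi_k$ for the classes of $\Out(A_\Gamma)$, pass to the finite extension $A_\Gamma\rtimes\langle\varphi_k\rangle$, and solve its conjugacy problem by extending the Huang--Osajda geometry to that extension. One correction: for large-type Artin groups Huang--Osajda prove genuine \emph{systolicity} (via an explicit thickening of the Cayley complex), not merely metric systolicity, and it is systolicity that yields biautomaticity via \cite[Theorem~13.1]{JanuszkiewiczSwiatkowski2006} and hence solvable conjugacy problem --- the heart of the paper's proof is precisely a description of this thickening together with the observation that graph automorphisms and the global inversion act simplicially on it. The routine step you spell out by hand (the effective factorisation $\varphi=\iota_g\circ\varphi_k$) is left implicit in the paper, and the twisted-to-ordinary conjugacy dictionary you derive is delegated there to \cite[Proposition~4.1]{soton69565}.
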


As a corollary, we list the cases where the condition in Theorem~\ref{thm:TCP} is known to be true.
The case where $\Gamma$ is a single edge with an even label follows from \cite[Theorem~3.28]{crowe2024twisted2}, and all the remaining cases follow from Theorems~\ref{thm:TCP}~and~\ref{thm:Out-finite}.

\begin{corollary}\label{cor:TCP}
    Let $A_\Gamma$ be a large-type Artin group. Then $TCP(A_\Gamma)$ is solvable if one of the following holds: 
    \begin{itemize}
        \item $A_\Gamma$ is XL, satisfies the COST property and $\Gamma$ is connected and twistless; or
        \item $\Gamma$ is connected without valence one vertices, and $A_\Gamma$ is fundamental. 
    \end{itemize}
\end{corollary}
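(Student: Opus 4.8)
The plan is to read the corollary off Theorem~\ref{thm:TCP} once the relevant outer automorphism groups have been pinned down by Theorem~\ref{thm:Out-finite}. Indeed, Theorem~\ref{thm:TCP} reduces the solvability of $TCP(A_\Gamma)$ to the single condition that $\Out(A_\Gamma)$ be generated by the graph automorphisms together with the global inversion (and hence, in particular, is finite). So the whole argument consists in checking that this condition holds for each of the two families of large-type Artin groups listed in the statement; no further reasoning about twisted conjugacy is needed.

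For the first item I would invoke the part of Theorem~\ref{thm:Out-finite} that treats XL Artin groups: when $A_\Gamma$ is XL, satisfies the COST property, and $\Gamma$ is connected and twistless, that theorem gives exactly that $\Out(A_\Gamma)$ is generated by the graph automorphisms and the global inversion, so Theorem~\ref{thm:TCP} applies directly. For the second item I would appeal to the part of Theorem~\ref{thm:Out-finite} that treats fundamental large-type Artin groups --- equivalently, by the structural results of the paper, those whose defining graph admits a twistless hierarchy terminating in twistless stars --- after checking that a connected $\Gamma$ with no valence-one vertices for which $A_\Gamma$ is fundamental indeed falls under that hypothesis; again one obtains that $\Out(A_\Gamma)$ is generated by the graph automorphisms and the global inversion, and Theorem~\ref{thm:TCP} finishes the job. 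The further case mentioned in the surrounding text, where $\Gamma$ is a single edge with an even label, is not one of these two families and is instead covered by the direct solution of $TCP$ for two-generator Artin groups in \cite[Theorem~3.28]{crowe2024twisted2}.

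The corollary carries no mathematical difficulty beyond what is already contained in Theorems~\ref{thm:TCP} and~\ref{thm:Out-finite}; the one thing to be careful about is the bookkeeping, namely making sure that each hypothesis as phrased in the corollary (``XL, COST, connected, twistless'' in the first case; ``connected, no valence-one vertices, fundamental'' in the second) is matched precisely by a situation in which Theorem~\ref{thm:Out-finite} yields the generation statement required by Theorem~\ref{thm:TCP}, and in particular that neither hypothesis secretly includes the single even edge. Once that matching is in place the corollary is immediate.
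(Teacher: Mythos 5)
Your overall strategy matches the paper's: Corollary~\ref{cor:TCP} is meant to follow by feeding Theorem~\ref{thm:Out-finite} into Theorem~\ref{thm:TCP}. However, there is a concrete gap in how you handle the even-edge case, and it is precisely the point the paper flags in the sentence preceding the corollary.

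Compare the hypotheses carefully. Theorem~\ref{thm:Out-finite}'s first bullet reads ``XL, COST, $\Gamma$ connected, twistless, \emph{and not an even edge}.'' The corollary's first bullet deliberately \emph{drops} the ``not an even edge'' clause. So when you write that, under the corollary's first hypothesis, ``that theorem gives exactly that $\Out(A_\Gamma)$ is generated by the graph automorphisms and the global inversion,'' you are applying Theorem~\ref{thm:Out-finite} outside its stated scope: if $\Gamma$ is a single even-labelled edge (with label at least $4$, so XL; COST and twistlessness do not exclude it), the corollary's first hypothesis is satisfied but Theorem~\ref{thm:Out-finite} is silent. Your later remark that the even edge ``is not one of these two families'' is therefore the wrong resolution --- it \emph{is} in the first family as the corollary phrases it, which is exactly why Theorem~\ref{thm:Out-finite} was stated with the extra exclusion. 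The paper handles this residual case by citing \cite[Theorem~3.28]{crowe2024twisted2}, which solves $TCP$ for two-generator (dihedral) Artin groups directly, and only then applies Theorems~\ref{thm:TCP} and~\ref{thm:Out-finite} to all remaining cases. Your proof needs that same case split; as written, it asserts (incorrectly) that no split is needed and that the generation statement follows in full generality from Theorem~\ref{thm:Out-finite}. A smaller, inessential inaccuracy: you describe ``fundamental'' as equivalent to ``$\Gamma$ admits a twistless hierarchy terminating in twistless stars,'' whereas the paper only records the latter as a \emph{sufficient} condition for the former (\cite[Proposition~5.11]{huang2024rigidity}); the corollary's second bullet is stated directly in terms of fundamentality, so no equivalence is needed there.
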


Here twistless means that the graph $\Gamma$ does not have a cut-vertex or a separating edge, and the COST and fundamental properties are properties ensuring a good behaviour for patterns of intersections of some fixed point sets in the Deligne complex.
For precise definitions see Section \ref{sec:Artin}.

In \cite[Proposition 6.1]{bmv2024homomorphisms} is was shown that XXXL Artin groups satisfy the COST property, and in \cite[Proposition 5.11]{huang2024rigidity} it was shown that if $\Gamma$ has a twistless hierarchy terminating on twistless stars, then $A_\Gamma$ is fundamental.
These are the two biggest explicit families known to satisfy one of the conditions in Corollary~\ref{cor:TCP}, but they are conjectured to hold in more generality.

Twisted conjugacy of Artin groups from these two families have recently been studied in terms of the number of twisted conjugacy classes: in \cite[Theorem~4]{SorokoVaskou} it was shown that these groups $A_\Gamma$ satisfy the property $R_\infty$, i.e.\ for any $\varphi \in \Aut(A_\Gamma)$, the number of $\varphi$-twisted conjugacy classes is infinite.
Similar properties, both in terms of the number of $\varphi$-twisted conjugacy classes and the solubility of the $\varphi$-twisted conjugacy problem, have previously been studied for specific automorphisms $\varphi$ of XL Artin groups \cite{Juhasz2010}.

As a corollary of Theorem \ref{thm:TCP} we get the following (see \cite[\S 2]{soton69565} for the definition of algorithmic and Section~\ref{sec:Artin} for the statement of Theorem~\ref{thm:s.e.s.}).

\begin{corollary}\label{cor:application}
    Let $A_\Gamma$ be an Artin group as in Theorem \ref{thm:TCP}.
    Then, for any algorithmic short exact sequence of groups
    \[ 1 \to A_\Gamma \to G \to H \to 1 \]
    with $H$ satisfying the hypotheses of Theorem \ref{thm:s.e.s.}, the group $G$ has solvable conjugacy problem.
\end{corollary}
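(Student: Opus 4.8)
The plan is to deduce Corollary~\ref{cor:application} directly from Theorem~\ref{thm:TCP} and the extension criterion recalled as Theorem~\ref{thm:s.e.s.}. Given an algorithmic short exact sequence $1 \to A_\Gamma \to G \xrightarrow{\pi} H \to 1$ with $H$ satisfying the hypotheses of Theorem~\ref{thm:s.e.s.}, that criterion reduces the solvability of $CP(G)$ to three properties of the normal subgroup $N = A_\Gamma$: (i) $CP(N)$ is solvable; (ii) $TCP_\varphi(N)$ is solvable uniformly in $\varphi \in \Aut(N)$, that is, $TCP(N)$ is solvable (this in particular, taking $\varphi = \mathrm{id}$, includes (i)); and (iii) the action subgroup $A_G \le \Aut(N)$ — the image of the conjugation homomorphism $G \to \Aut(N)$, well defined since $N \trianglelefteq G$ — is orbit decidable, i.e.\ there is an algorithm that, given $u,v \in N$, decides whether $\varphi(u)$ is conjugate to $v$ in $N$ for some $\varphi \in A_G$. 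Properties (i) and (ii) are handed to us by Theorem~\ref{thm:TCP}: its statement is precisely the uniform solvability of $TCP_\varphi(A_\Gamma)$, and $\varphi = \mathrm{id}$ gives $CP(A_\Gamma)$.

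The remaining work is to verify (iii). First I would observe that under the hypotheses of Theorem~\ref{thm:TCP} the group $\Out(A_\Gamma)$ is finite: it is generated by the images of the graph automorphisms, which form a subgroup of order at most $|\Aut(\Gamma)| < \infty$, together with the class $[\iota]$ of the global inversion, which has order at most two and normalises (indeed centralises) that subgroup; hence $|\Out(A_\Gamma)| \le B := 2\,|\Aut(\Gamma)|$, an explicit bound. Since $A_G \supseteq \Inn(A_\Gamma)$, its image $\overline{A_G}$ in $\Out(A_\Gamma)$ is a finite group, generated by the images of the conjugation automorphisms $\varphi_1, \dots, \varphi_r \in \Aut(A_\Gamma)$ of a finite generating set $g_1, \dots, g_r$ of $G$; these $\varphi_j$ can be computed from the data of the algorithmic short exact sequence, which in particular lets one rewrite each $g_j x g_j^{-1}$, for $x$ a standard generator of $A_\Gamma$, as a word in the standard generators. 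As every element of a group of order at most $B$ is a product of at most $B-1$ of any given generating set and its inverses, the finite computable set $\Psi \subseteq \Aut(A_\Gamma)$ of all products of at most $B-1$ of the $\varphi_j^{\pm 1}$ satisfies $A_G = \bigcup_{\psi \in \Psi} \psi\,\Inn(A_\Gamma)$. For $u,v \in A_\Gamma$, $\psi \in \Psi$ and $w \in A_\Gamma$ the element $\psi(w u w^{-1}) = \psi(w)\,\psi(u)\,\psi(w)^{-1}$ is conjugate to $\psi(u)$; therefore some $\varphi \in A_G$ sends $u$ to a conjugate of $v$ if and only if $\psi(u)$ is conjugate to $v$ for some $\psi \in \Psi$, and the latter is decidable by (i). This establishes (iii), and Theorem~\ref{thm:s.e.s.} then yields that $CP(G)$ is solvable.

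I do not expect a serious obstacle in this deduction: all of the genuine content sits inside Theorem~\ref{thm:TCP}, and the rest is the extension machinery together with the observation that the structural hypothesis on $\Out(A_\Gamma)$ — already in force for Theorem~\ref{thm:TCP} — also forces $\Out(A_\Gamma)$ to be finite, which is exactly what makes the action subgroup orbit decidable. The one point deserving a careful sentence is that an algorithmic short exact sequence really does supply the conjugation automorphisms $\varphi_j$ as explicit automorphisms of $A_\Gamma$; this is built into the definition of ``algorithmic'' in \cite[\S 2]{soton69565}, so it requires only a pointer rather than an argument.
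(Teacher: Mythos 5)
Your proof is correct and takes essentially the same approach as the paper: reduce via Theorem~\ref{thm:s.e.s.} to orbit decidability of the action subgroup $A_G$, then use finiteness of $\Out(A_\Gamma)$ (from Theorem~\ref{thm:Out-finite}) to replace the search over $A_G$ by a search over a finite, computable set of coset representatives modulo $\Inn(A_\Gamma)$, and decide conjugacy in $A_\Gamma$ for each candidate. The only difference is that you spell out the computability of the representatives (via the word-length bound $B-1$ and the algorithmic data of the short exact sequence) where the paper simply asserts the resulting problem is algorithmically solvable.
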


In particular, if an Artin group $A_\Gamma$ satisfies the assumptions of Theorem~\ref{thm:TCP}, then any group $G$ containing $A_\Gamma$ as a finite-index normal subgroup has solvable conjugacy problem. In fact, we can deduce from our proofs that such groups $G$ are systolic and therefore biautomatic \cite[Theorem~E]{JanuszkiewiczSwiatkowski2006}.

\subsection*{Acknowledgements}
The first author wants to thank Enric Ventura for pointing out Theorem \ref{thm:s.e.s.}. The second author was partially supported by the National Science Centre (Poland) grant 2022/47/D/ST1/00779.

\section{Preliminaries} \label{sec:Artin}

A \emph{presentation graph} is a finite simplicial graph $\Gamma$ where every edge connecting vertices $s$ and $t$ is labelled by an integer $m_{st} \geq 2$.
Given a presentation graph $\Gamma$, the associated \emph{Artin group} $A_\Gamma$ is the group given by the following presentation:
\[A_\Gamma \coloneqq \langle s \in V(\Gamma) \mid \Pi(s, t; m_{st}) = \Pi(t, s; m_{st})  ~ \mbox{ whenever } s,t \mbox{ are adjacent in }\Gamma \rangle, \]
where $\Pi(x, y; k)$ denotes the alternating product of $xyxy\cdots$ with $k$ letters.
Similarly the associated \emph{Coxeter group} $W_\Gamma$
is the group given by the following presentation:
\[W_\Gamma \coloneqq \langle s \in V(\Gamma) \mid s^2=1 \mbox{ for all } s\in V(\Gamma), (st)^{ m_{st}}=1 ~ \mbox{ whenever } s,t \mbox{ are adjacent in }\Gamma \rangle, \]

Given a subset $S\subseteq V(\Gamma)$, the subgroup $\langle S \rangle \leqslant A_\Gamma$ is called a \emph{standard parabolic subgroup} of $A_\Gamma$.
By a result of van der Lek \cite[Theorem~II.4.13]{van1983homotopy}, the subgroup $\langle S \rangle \leqslant A_\Gamma$ is isomorphic to the Artin group whose defining presentation graph is the subgraph of $\Gamma$ induced by $S$.

An Artin group $A_\Gamma$ is:
\begin{itemize}
    \item \emph{large-type} if all labels in $\Gamma$ are at least $3$;
    \item \emph{XL} if all labels in $\Gamma$ are at least $4$;
    \item \emph{XXXL} if all labels in $\Gamma$ are at least $6$;
    \item \emph{free-of-infinity} if $\Gamma$ is a complete graph;
    \item \emph{hyperbolic-type} if $W_\Gamma$ is a hyperbolic group;
    \item \emph{spherical-type} if $W_\Gamma$ is finite.
\end{itemize}

\begin{definition}
    The \emph{Deligne complex} of an Artin group $A_\Gamma$ is the simplicial complex $D_\Gamma$ defined as follows: \begin{itemize}
    \item Vertices correspond to left cosets of standard parabolic subgroups of spherical type.
    \item For every $g \in A_\Gamma$ and for every chain of induced subgraphs $\Gamma_0 \subsetneq \cdots \subsetneq \Gamma_k$ with $A_{\Gamma_0}, \ldots, A_{\Gamma_k}$ of spherical type, there is a $k$-simplex with vertices $gA_{\Gamma_0}, \ldots, gA_{\Gamma_k}$.
    \end{itemize}
    Equivalently, the Deligne complex $D_\Gamma$ is the geometric realisation of the poset of left cosets of standard parabolic subgroups of spherical type.
    
    The group $A_\Gamma$ acts on $D_\Gamma$ by left multiplication on left cosets, and we denote by $K_{\Gamma}$ the subcomplex induced by the vertices of the form $1 \cdot A_{\Gamma'}$.
    A \emph{standard tree} of $D_\Gamma$ is the fixed-point set of a conjugate of a standard generator of $A_\Gamma$ (see \cite{martin2022acylindrical}).  
\end{definition}

In \cite{bmv2024homomorphisms} the first author, Martin and Vaskou studied homomorphisms between large-type Artin groups, and to do so introduced the following notions.

\begin{definition}\label{def:cycle_standard_trees_property}
    A \emph{cycle of standard trees} is a sequence $T_1, \ldots, T_n$ of distinct standard trees of the Deligne complex $D_\Gamma$ such that for every $i \in \mathbb{Z}/n\mathbb{Z}$, the intersection $T_i \cap T_{i+1}$ is a vertex, and such that: 
    \begin{itemize}
        \item for every $i \in \mathbb{Z}/n\mathbb{Z}$, the generators $x_i$ and $x_{i+1}$ of $\Fix(T_i)$ and $\Fix(T_{i+1})$ respectively generate a dihedral Artin group, 
        \item for every distinct $i, j$ with $j \neq i\pm1$, the generators of $\Fix(T_i)$ and $\Fix(T_j)$ generate a non-abelian free group.
    \end{itemize}
    
    An Artin group $A_\Gamma$ satisfies the \emph{Cycle of Standard Trees Property} (COST) if the following holds for any cycle of standard trees: let $T_1, \ldots, T_n$ be a cycle of standard trees in $D_\Gamma$, and let $\gamma$ be the loop of $D_\Gamma$ obtained by concatenating, for $i \in \mathbb{Z}/n\mathbb{Z}$, the geodesic segments $\gamma_i$ of $T_i$ between the vertices $T_i\cap T_{i-1}$ and $T_i\cap T_{i+1}$.
    Then there exists an element $g\in A_\Gamma$ such that $\gamma$ is contained in $gK_{\Gamma}$.
\end{definition}

In a similar vein, Huang, Osajda and Vaskou introduced the following notions to study rigidity of large-type Artin groups.

\begin{definition}
    Let $\Gamma$ be a connected presentation graph with $V(\Gamma) = \{s_1,\ldots,s_n\}$.
    Then $A_\Gamma$ is \emph{fundamental} if for every distinct standard trees $T_1,\ldots,T_n$ in $D_\Gamma$ such that
    \begin{itemize}
        \item if $s_i$ and $s_j$ are adjacent in $\Gamma$ then $T_i \cap T_j$ is a single vertex $v_{ij}$, and all the $v_{ij}$'s are distinct; and
        \item if $s_i$ and $s_j$ are not adjacent, then $T_i \cap T_j = \emptyset$;
    \end{itemize}
    there is a unique element $g\in A_\Gamma$ such that the translate $gK_\Gamma$ contains all the vertices of the form $v_{ij}$ and contains a vertex of the form $g\langle s_{t_i} \rangle \in T_i$ for every $i\in\{1,\ldots,n\}$.
\end{definition}

\begin{definition}
    We say that an edge $e$ in a connected graph $\Gamma$ is \emph{separating} if there exist two induced connected subgraphs $\Gamma_1$, $\Gamma_2$ such that $\Gamma = \Gamma_1 \cup \Gamma_2$ and $\Gamma_1 \cap \Gamma_2 = e$.
    A graph $\Gamma$ is said to be \emph{twistless} if it does not have a cut-vertex or a separating edge.
    An \emph{admissible decomposition} of $\Gamma$ is a pair of full subgraphs $\Gamma_1$ and $\Gamma_2$ of $\Gamma$ such that $\Gamma = \Gamma_1 \cup \Gamma_2$.
    The decomposition is \emph{twistless} if $\Gamma_1 \cap \Gamma_2$ is not the empty set, a vertex, or an edge.
    The graph $\Gamma$ has a \emph{hierarchy} terminating in a class of graphs $\mathcal{C}$, if it is possible to start with $\Gamma$ and perform admissible decompositions a finite number of times to obtain a collection of graphs in $\mathcal{C}$.
    The hierarchy is \emph{twistless} the decompositions at each step are twistless.
    The graph $\Gamma$ is a \emph{twistless star} if it has a central vertex and is twistless.
\end{definition}

In \cite[Proposition~6.1]{bmv2024homomorphisms} it was shown that XXXL Artin groups satisfy the COST property, and in \cite[Proposition 5.11]{huang2024rigidity} it was shown that if $\Gamma$ has a twistless hierarchy terminating on twistless stars, then $A_\Gamma$ is fundamental.
Both of these properties are conjectured to hold for bigger families.
Combining results from \cite{bmv2024homomorphisms} and \cite{huang2024rigidity}, we can obtain the following description of the automorphism group of some classes of large-type Artin groups.
Here graph automorphisms are automorphisms of $A_\Gamma$ induced by automorphisms of the labelled graph $\Gamma$, and the global inversion is the automorphism that sends every generator to its inverse.

\begin{theorem}[{\cite[Corollary~1.7]{bmv2024homomorphisms},\cite[Theorem~6.6]{huang2024rigidity}}]
\label{thm:Out-finite}
    Let $A_\Gamma$ be a large-type Artin group such that one of the following holds:
    \begin{itemize}
        \item $A_\Gamma$ is XL, satisfies the COST property and $\Gamma$ is connected, twistless, and not an even edge; or
        \item $\Gamma$ is connected without valence one vertices, and $A_\Gamma$ is fundamental. 
    \end{itemize}
    Then $\Aut(A_\Gamma)$ is generated by the conjugations, the graph automorphisms, and the global inversion.
    In particular, $\Out(A_\Gamma)$ is finite.
\end{theorem}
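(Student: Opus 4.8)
The plan is to derive the statement from the rigidity theorems of \cite{bmv2024homomorphisms} (which handles the first case) and \cite{huang2024rigidity} (which handles the second), assembled along a common strategy: I take an arbitrary $\varphi\in\Aut(A_\Gamma)$ and show that, after composing with an inner automorphism and a graph automorphism, it becomes either the identity or the global inversion.

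First I would identify the images of the standard generators. Each standard generator $s$ spans a cyclic subgroup whose conjugates have standard trees as fixed-point sets in $D_\Gamma$, and adjacency of $s,t$ in $\Gamma$ is detected by whether $\langle s,t\rangle$ is a dihedral Artin group or a non-abelian free group. Feeding $\varphi$ and $\varphi^{-1}$ into the classification of homomorphisms between large-type Artin groups from \cite{bmv2024homomorphisms} in the first case — where the XL hypothesis and the exclusion of a single even edge are used — and into the rigidity results of \cite{huang2024rigidity} in the second, I would conclude that $\varphi(s)$ is conjugate to $(s')^{\pm1}$ for some standard generator $s'$, and that $s\mapsto s'$ is an automorphism $\sigma$ of the labelled graph $\Gamma$ (it respects adjacency and labels because $\varphi$ preserves the isomorphism types of dihedral parabolics). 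Composing $\varphi$ with the graph automorphism induced by $\sigma^{-1}$, I may assume $\varphi(s) = g_s\,s^{\varepsilon_s}\,g_s^{-1}$ with $g_s\in A_\Gamma$ and $\varepsilon_s\in\{\pm1\}$ for every $s\in V(\Gamma)$.

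Next — and this is the crux — I would observe that the translated standard trees $T_s \coloneqq g_s\cdot T_s^{0}$, where $T_s^{0}$ is the standard tree of $s$, form a configuration whose pattern of pairwise intersections realises the incidence structure of $\Gamma$: for adjacent $s,t$ the subgroup $\langle\varphi(s),\varphi(t)\rangle$ is a dihedral Artin group, forcing $T_s\cap T_t$ to be a single vertex, these vertices are pairwise distinct, and $T_s\cap T_t=\emptyset$ for non-adjacent $s,t$ since $\langle\varphi(s),\varphi(t)\rangle$ is free. This is exactly the configuration for which the COST property (first case, applied through \cite[Corollary~1.7]{bmv2024homomorphisms}) and the fundamental property (second case, applied through \cite[Theorem~6.6]{huang2024rigidity}) were introduced: each produces a single element $g\in A_\Gamma$ with $gK_\Gamma$ carrying all of these intersection vertices, which amounts to saying that the conjugators $g_s$ may all be taken equal to $g$. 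Composing $\varphi$ with the inner automorphism $x\mapsto g^{-1}xg$, I reduce to $\varphi(s)=s^{\varepsilon_s}$ for all $s$.

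Finally I would show the sign function $\varepsilon$ is constant. Applying $\varphi$ to the defining relation of an edge $\{s,t\}$ shows $\varepsilon_s=\varepsilon_t$ — for odd $m_{st}$ this is immediate, and for even $m_{st}$ it follows because a mixed-sign assignment would force a non-central element of a dihedral Artin group to be central, which fails by an abelianisation count — so, $\Gamma$ being connected, $\varepsilon$ is constant and $\varphi\in\{\mathrm{id},\ \text{global inversion}\}$. Hence every $\varphi$ lies in the subgroup of $\Aut(A_\Gamma)$ generated by inner automorphisms, graph automorphisms and the global inversion, and since there are only finitely many graph automorphisms and the global inversion is an involution, $\Out(A_\Gamma)$ is finite. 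The main obstacle is the synchronisation step: without a local-to-global principle of COST/fundamental type the individual conjugators $g_s$ cannot be merged into one, and the conclusion genuinely breaks for the excluded graphs — a single even edge admits ``transvection'' automorphisms such as $s\mapsto s^{-1},\ t\mapsto s^2t$, and a valence-one vertex $s$ with neighbour $t$ admits the partial conjugations $s\mapsto t^{n}st^{-n}$, producing an infinite $\Out$ in both cases; ruling these out is precisely the role of the hypotheses that $\Gamma$ is not an even edge, respectively that $\Gamma$ has no valence-one vertices.
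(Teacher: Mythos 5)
The paper does not prove this statement at all: Theorem~\ref{thm:Out-finite} is imported verbatim as a combination of \cite[Corollary~1.7]{bmv2024homomorphisms} and \cite[Theorem~6.6]{huang2024rigidity}, so there is no in-paper argument to compare against. Your outline is a reasonable reconstruction of the strategy used in those references (generators map to conjugates of generators or their inverses, the COST/fundamental property synchronises the conjugators into a single element $g$, connectedness of $\Gamma$ forces a constant sign), and at the decisive points you in fact lean on the very same cited results, so your treatment is consistent with the paper's.

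Two micro-steps in your sketch are looser than you present them. First, before the fundamental property can be invoked you must verify its full hypothesis: that the intersection vertices $v_{ij}=T_{s_i}\cap T_{s_j}$ are pairwise distinct is not automatic from $\langle\varphi(s_i),\varphi(s_j)\rangle$ being dihedral, since several standard trees can pass through one type-$2$ vertex; ruling out such coincidences requires an argument (this is done in \cite{huang2024rigidity}). Second, your justification of $\varepsilon_s=\varepsilon_t$ across an even-labelled edge ``by an abelianisation count'' is not enough on its own: the mixed-sign assignment does descend to the abelianisation $\mathbb{Z}^2$, so abelianisation alone refutes nothing. What one actually needs is that applying the mixed-sign map to $(st)^{m/2}=(ts)^{m/2}$ would force $(st^{-1})^{m/2}$ to commute with $t$, and then either the identification of the centre of the even dihedral Artin group with $\langle (st)^{m/2}\rangle$ (after which the abelianisation count finishes the job) or a normal-form argument in the splitting $\langle y,t\mid [y^{m/2},t]=1\rangle$, $y=st$, to see this fails for $m\geq 4$. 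Neither point is fatal, but both should be spelled out if you intend your sketch as a genuine proof rather than a pointer to the cited works.
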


Large-type Artin groups also enjoy the property of being systolic, and as a consequence they are biautomatic \cite[Theorem 13.1]{JanuszkiewiczSwiatkowski2006} and have solvable conjugacy problem \cite[Theorem~2.5.7]{Epsteinetal1992}.
Systolicity was shown by Huang and Osajda in \cite{huang2020large}, where they constructed a thickening of the Cayley complex of a large-type Artin group and proved that it is systolic.

If $G$ is a group, $F$ is a normal subgroup of $G$ and $g\in G$, we denote by $\varphi_g$ the automorphism of $F$ induced by conjugation by $g$.
In \cite{soton69565} Bogopolski, Martino and Ventura proved the following theorem (see \cite[\S 2]{soton69565} for the definition of algorithmic).
Here, if $A$ is a subgroup of $\Aut(F)$ for a group $F$, we say that $A$ is \emph{orbit decidable} if there is an algorithm which, for any two $u,v\in F$ decides whether there exists $\varphi \in A$ such that $\varphi(u)$ and $v$ are conjugate in $G$.

\begin{theorem}[{\cite[Theorem~3.1]{soton69565}}]\label{thm:s.e.s.}
Let
\[1 \to F \xrightarrow{\alpha} G \xrightarrow{\beta} H \to 1\]
be an algorithmic short exact sequence of groups such that
\begin{enumerate}
    \item $CP(H)$ is solvable, and
    \item for every $1 \neq h \in H$, the subgroup $\langle h \rangle$ has finite index in its centralizer $C_H(h)$, and there is an algorithm which computes a finite set of coset representatives, $z_{h,1},\ldots,z_{h,t_h} \in H$,
    \[C_H(h) = \langle h \rangle z_{h,1} \sqcup \cdots \sqcup \langle h \rangle z_{h,t_h}.\]
\end{enumerate}
Moreover, suppose that $TCP(F)$ is solvable. Then, the following are equivalent:
\begin{enumerate}[(a)]
    \item $CP(G)$ is solvable,
    \item the action subgroup $A_G = \{\varphi_g \mid g \in G\} \leqslant \Aut(F)$ is orbit decidable.
\end{enumerate}
\end{theorem}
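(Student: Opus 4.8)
The plan is to prove the two implications separately; $(a)\Rightarrow(b)$ is essentially formal. For the particular subgroup $A_G=\{\varphi_g\mid g\in G\}$ and any $u,v\in F$, the condition ``there is $\varphi\in A_G$ with $\varphi(u)$ conjugate to $v$ in $G$'' says exactly that $gug^{-1}$ is $G$-conjugate to $v$ for some $g\in G$, i.e.\ that $u$ and $v$ are conjugate in $G$; so a solution to $CP(G)$ decides orbit membership for $A_G$, with no further hypothesis needed.

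For $(b)\Rightarrow(a)$ I would solve $CP(G)$ by working fibrewise over $H$. Given $u,v\in G$, first use solvability of $CP(H)$ to decide whether $\beta(u)$ and $\beta(v)$ are conjugate in $H$; if not, $u$ and $v$ are not conjugate in $G$. Otherwise pick $h_0$ with $h_0\beta(u)h_0^{-1}=\beta(v)$, lift it to some $g_0\in G$ (possible since the sequence is algorithmic), and replace $u$ by $g_0ug_0^{-1}$, so that $\beta(u)=\beta(v)=:h$; any $g$ conjugating $u$ to $v$ then has $\beta(g)\in C_H(h)$. If $h=1$, then $u,v\in F$, and deciding whether they are conjugate in $G$ is precisely orbit membership for $A_G$, which is decidable by $(b)$.

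The substantial case is $h\neq1$. Using hypothesis $(2)$, compute coset representatives $z_{h,1},\dots,z_{h,t_h}$ of $\langle h\rangle$ in $C_H(h)$ and lift them to $\tilde z_i\in G$. Since $\beta(u)=h$, every $g$ with $\beta(g)\in C_H(h)$ has the form $g=f\,u^{k}\tilde z_i$ for some $f\in F$, $k\in\ZZ$ and $i$; substituting this into $gug^{-1}=v$ and writing $\psi:=\varphi_u\in\Aut(F)$, a direct rearrangement turns the equation into the statement that $uv^{-1}$ is $\psi$-twisted conjugate in $F$ to $\psi^{k}(c_i^{-1})$, where $c_i:=\tilde z_i u\tilde z_i^{-1}u^{-1}\in F$ is computable from $u$ and $\tilde z_i$ (algorithmicity of the sequence is used to produce $\psi$ and the $c_i$). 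The key observation is that $\psi^{k}(a)$ and $a$ always lie in the same $\psi$-twisted conjugacy class of $F$, with an explicit conjugating element built from $a$, so the integer parameter $k$ is irrelevant; hence $u$ and $v$ are conjugate in $G$ if and only if $uv^{-1}$ is $\psi$-twisted conjugate in $F$ to one of $c_1^{-1},\dots,c_{t_h}^{-1}$. This is decidable because $TCP(F)$ is solvable uniformly in the automorphism, which is essential since $\psi$ depends on $u$. Assembling the cases gives a decision procedure for $CP(G)$.

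The main obstacle is the $h\neq1$ case: controlling a potential conjugator through its image in $H$ by means of the centralizer hypothesis, performing the algebra that converts $gug^{-1}=v$ into a twisted conjugacy equation over $F$, and eliminating the integer parameter $k$ via the $\psi$-invariance of twisted conjugacy classes. Everything else --- the reductions over $H$, the liftings, and the computation of $\psi$ and the $c_i$ --- is routine bookkeeping once one uses that the short exact sequence is algorithmic.
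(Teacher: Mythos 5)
This theorem is quoted in the paper with a citation to Bogopolski--Martino--Ventura and is not reproved there, so there is no ``paper's own proof'' to compare against; your argument is nevertheless correct and reconstructs the original BMV proof along essentially the same lines. The reduction over $H$ via $CP(H)$ and a lift, the split into $h=1$ (which collapses to orbit decidability of $A_G$) and $h\neq 1$, the parametrisation of conjugators as $g=f\,u^{k}\tilde z_i$ using the centraliser hypothesis, the conversion of $gug^{-1}=v$ into a $\varphi_u$-twisted conjugacy equation over $F$, and the observation that $\psi^{k}(a)$ and $a$ are always $\psi$-twisted conjugate (via the conjugator $a$ itself), which eliminates the integer $k$ and leaves finitely many instances of $TCP(F)$, are all exactly the ingredients of the cited proof; your remark that the solvability of $TCP(F)$ must be uniform in the automorphism is also the right reading of the hypothesis.
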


\section{Proofs}

\begin{proof}[Proof of Theorem \ref{thm:TCP}]
Let $\varphi \in \Aut(A_\Gamma)$, and consider the group $A_\Gamma \rtimes \langle \varphi \rangle$, where $\langle \varphi \rangle$ is a cyclic group of order $k$ if $\varphi$ has order $k < \infty$ and an infinite cyclic group otherwise.
By \cite[Proposition~4.1]{soton69565}, solvability of $CP(A_\Gamma \rtimes \langle \varphi \rangle)$ implies solvability of $TCP_\varphi(A_\Gamma)$.
So in order to prove Theorem~\ref{thm:TCP}, it suffices to show that $A_\Gamma \rtimes \langle \varphi \rangle$ is systolic.
We will prove that the action of $A_\Gamma$ on the thickening of the Cayley complex extends to an action of $A_\Gamma \rtimes \langle \varphi \rangle$.
If $\varphi$ has finite order, then $A_\Gamma \rtimes \langle \varphi \rangle$ is a finite extension of $A_\Gamma$ and therefore the latter action is geometric.

We proceed to describe the thickening of the Cayley complex constructed in \cite[Definitions 3.7 and 5.3]{huang2020large}.
Let $\tilde{K}_\Gamma$ be the Cayley complex of $A_\Gamma$ associated to the standard presentation.
For each edge in $\Gamma$ between vertices $s,t \in V(\Gamma)$ there is a relation of length $2m_{st}$ that lifts to copies in $\tilde{K}_\Gamma$.
Subdivide each of these lifts to a $2m_{st}$-gon by adding $m_{st}-2$ interior vertices, and call each of these subdivided $2$-cells a precell (see Figure~\ref{fig:precell}).
Now, if two precells $C_1$, $C_2$ intersect at more than one edge, first connect interior vertices $v_1 \in C_1$, $v_2 \in C_2$ such that both $\{v_1\} \cup e$ and $\{v_2\} \cup e$ span triangles for some edge $e \subset C_1 \cap C_2$, and then add edges between interior vertices of $C_1$ and $C_2$ to form a zigzag as in Figure~\ref{fig:zigzag}.
The flag completion of this complex is the desired thickening.

It is clear that if $\varphi\in\Aut(A_\Gamma)$ is a graph automorphism or the global inversion, then it induces an automorphism of the thickening of $\tilde{K}_\Gamma$, and since $\varphi$ has finite order we get that $A_\Gamma \rtimes \langle \varphi \rangle$ acts geometrically.
Hence $A_\Gamma \rtimes \langle \varphi \rangle$ is systolic and has solvable conjugacy problem, so $TCP_\varphi(A_\Gamma)$ is solvable.
Since the classes of such automorphisms $\varphi$ generate $\Out(A_\Gamma)$ by Theorem~\ref{thm:Out-finite}, it follows that $TCP(A_\Gamma)$ is solvable as well, as required.
\end{proof}

\begin{figure}[ht]
    \centering
    \begin{tikzpicture}[scale=0.8]
        \draw[very thick] (0,0) -- (1,2) -- (11,2) -- (12,0) -- (11,-2) -- (1,-2) -- cycle;
        \draw (0,0) -- (12,0);
        \foreach \x in {0,...,4} {
            \draw ({2*\x+1},-2) -- ({2*\x+3},2);
            \draw ({2*\x+1},2) -- ({2*\x+3},-2);
            \draw [fill=white] ({2*\x+2},0) circle (2pt);
            \draw [very thick,->] ({2*\x+1.9},{2*(-1)^\x}) -- ({2*\x+2.1},{2*(-1)^\x}) node [yshift={8pt*(-1)^\x}] {$t$};
            \draw [very thick,->] ({2*\x+1.9},{-2*(-1)^\x}) -- ({2*\x+2.1},{-2*(-1)^\x}) node [yshift={-7pt*(-1)^\x}] {$s$};
        }
        \draw [very thick,->] (0.45,0.9) -- (0.55,1.1) node [left,yshift=3pt] {$s$};
        \draw [very thick,->] (0.45,-0.9) -- (0.55,-1.1) node [left,yshift=-3pt] {$t$};
        \draw [very thick,->] (11.45,1.1) -- (11.55,0.9) node [right] {$s$};
        \draw [very thick,->] (11.45,-1.1) -- (11.55,-0.9) node [right] {$t$};
    \end{tikzpicture}
    \caption{Subdivision of a precell with $m_{st}=7$.}
    \label{fig:precell}
\end{figure}
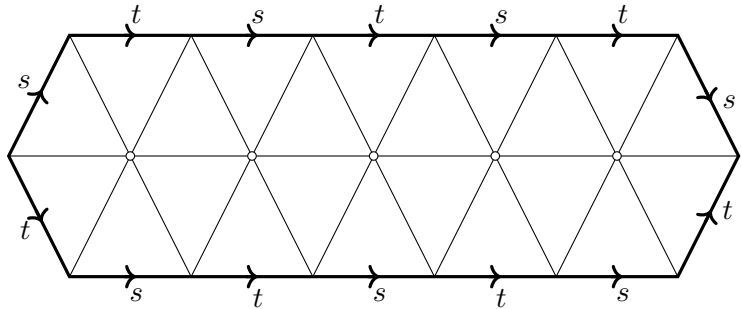

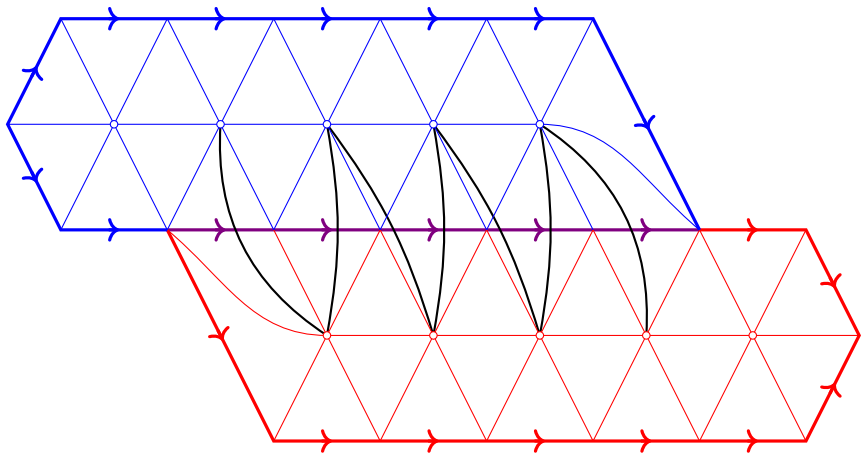
\begin{figure}[ht]
    \centering
    \begin{tikzpicture}[scale=0.7]
        \draw [very thick,red!50!blue] (-1,2) -- (9,2);
        \foreach \x in {0,...,4} \draw [very thick,red!50!blue,->] ({2*\x-0.1},2) -- ({2*\x+0.1},2);
        \draw [thick] (0,4) to[bend right] (2,0) to[bend right=10] (2,4) to[bend left=10] (4,0) to[bend right=10] (4,4) to[bend left=10] (6,0) to[bend right=10] (6,4) to[bend left] (8,0);
        \begin{scope}[red]
        \draw[very thick] (9,2) -- (11,2) -- (12,0) -- (11,-2) -- (1,-2) -- (-1,2);
        \draw (-1,2) to[out=-40,in=180] (2,0) -- (12,0);
        \foreach \x in {0,...,4} {
            \draw ({2*\x+1},-2) -- ({2*\x+3},2);
            \draw ({2*\x+1},2) -- ({2*\x+3},-2);
            \draw [fill=white] ({2*\x+2},0) circle (2pt);
            \draw [very thick,->] ({2*\x+1.9},-2) -- ({2*\x+2.1},-2);
        }
        \draw [very thick,->] (9.9,2) -- (10.1,2);
        \draw [very thick,->] (-0.05,0.1) -- (0.05,-0.1);
        \draw [very thick,->] (11.45,1.1) -- (11.55,0.9);
        \draw [very thick,->] (11.45,-1.1) -- (11.55,-0.9);
        \end{scope}
        \begin{scope}[blue,rotate=180,yshift=-4cm,xshift=-8cm]
        \draw[very thick] (9,2) -- (11,2) -- (12,0) -- (11,-2) -- (1,-2) -- (-1,2);
        \draw (-1,2) to[out=-40,in=180] (2,0) -- (12,0);
        \foreach \x in {0,...,4} {
            \draw ({2*\x+1},-2) -- ({2*\x+3},2);
            \draw ({2*\x+1},2) -- ({2*\x+3},-2);
            \draw [fill=white] ({2*\x+2},0) circle (2pt);
            \draw [very thick,<-] ({2*\x+1.9},-2) -- ({2*\x+2.1},-2);
        }
        \draw [very thick,<-] (9.9,2) -- (10.1,2);
        \draw [very thick,<-] (-0.05,0.1) -- (0.05,-0.1);
        \draw [very thick,<-] (11.45,1.1) -- (11.55,0.9);
        \draw [very thick,<-] (11.45,-1.1) -- (11.55,-0.9);
        \end{scope}
    \end{tikzpicture}
    \caption{Adding a zigzag (black) between two intersecting precells (red and blue).}
    \label{fig:zigzag}
\end{figure}

\begin{proof}[Proof of Corollary \ref{cor:application}]
    By Theorem~\ref{thm:s.e.s.}, to prove Corollary \ref{cor:application} it is enough to show that the action subgroup $A_G = \{\varphi_g \mid g \in G\} \leqslant \Aut(A_\Gamma)$ is orbit decidable.
    Let $O_G \leqslant \Aut(A_\Gamma)$ be a set of unique representatives of the projection of $A_G$ in $\Out(A_\Gamma)$.
    Then given $u,v\in A_\Gamma$, checking whether there is an automorphism in $A_G$ that sends $u$ to $v$ is equivalent to checking whether $v$ is conjugate to some element in $\{\varphi(u) \mid \varphi \in O_G\}$.
    Since $\Out(A_\Gamma)$ is finite (by Theorem~\ref{thm:Out-finite}), this problem is algorithmically solvable.
\end{proof}

\bibliographystyle{amsalpha}
\bibliography{mybib}

\end{document}